\documentclass[reqno]{amsart}
\usepackage[centertags]{amsmath}
\usepackage{graphicx}
\usepackage{amsfonts}
\usepackage{amssymb}
\usepackage{amsthm}
\usepackage{newlfont}
\usepackage[top=1in, bottom=1in, left=1in, right=1in]{geometry}
\usepackage{mathrsfs}
\usepackage{dsfont}

\usepackage[pdfborder={0 0 0},
   pdftitle={CONSTRUCTING SUBDIVISION RULES FROM RATIONAL MAPS},
  pdfauthor={BRIAN RUSHTON}, pdftex, bookmarks=true, bookmarksnumbered = true]{hyperref}

\theoremstyle{definition}
\newtheorem{thm}{Theorem}
\theoremstyle{definition}

\theoremstyle{definition}

\theoremstyle{remark}

\theoremstyle{definition}

\newcommand{\parlengths}{\setlength{\parindent}{0pt}}
\setlength{\parskip}{10pt}

\setcounter{tocdepth}{2}

\begin{document}
\date{\today}

\pdfbookmark[1]{CONSTRUCTING SUBDIVISION RULES FROM ALTERNATING
LINKS}{user-title-page}

\title{CONSTRUCTING SUBDIVISION RULES FROM ALTERNATING LINKS}

\author{Brian Rushton}
\address{Department of Mathematics, Brigham Young University, Provo, UT 84602, USA}
\email{brirush@gmail.com}

\begin{abstract}
The study of geometric group theory has suggested several theorems
related to subdivision tilings that have a natural hyperbolic
structure.  However, few examples exist.  We construct subdivision
tilings for the complement of every nonsingular, prime alternating
link.  These tilings define a combinatorial space at infinity, similar to the space at infinity for word hyperbolic groups.
\end{abstract}

\maketitle\parlengths

\section{Introduction}
\label{SectionIntroduction} This work grew out of a conjecture by
Cannon that connects two definitions of a hyperbolic group.
Specifically, he conjectured that every word hyperbolic group with
a 2-sphere at infinity acts properly discontinuously and
cocompactly by isometries on hyperbolic 3-space.  Cannon and
Swenson \cite{hyperbolic} were able to reduce this problem to
showing that a certain subdivision rule associated with the
hyperbolic group is conformal.  If we can understand when a
subdivision rule is conformal, we can solve this problem.

Recall that a subdivision rule is essentially a set of planar
tiles with a combinatorial rule for each tile that subdivides it
into smaller tiles.  They are defined more rigorously and
discussed in detail in \cite{subdivision}.

Every hyperbolic 3-manifold group has a conformal subdivision rule
\cite{hyperbolic}.  Unfortunately, there are not many explicit
examples of subdivision rules associated with hyperbolic groups or
with 3-manifold groups. Our purpose is to give an explicit
subdivision rule for every prime alternating link complement.
While these manifolds are not compact, their study may give hints
on how to proceed.

These subdivision rules are interesting in their own right, as
they allow us to construct the universal cover of the manifolds
(and thus the Cayley graph of the fundamental group) in a directly
geometric way.  They provide linear recursions for calculating
growth functions, and help us visualize the long-term behavior of
an infinite group.  They are a geometric analogue of power series,
in that an infinitely complex object (such as an analytic function
or the space at infinity of a group) can be modelled as the limit
of simpler objects (such as polynomials or subdivisions of the
sphere).

As a final note, subdivision rules for alternating links seem
somehow to help us visually distinguish between Thurston's eight
model geometries. For instance, the subdivision rules for the Hopf
link, the trefoil, and the Borromean rings are clearly different,
with properties suggestive of their $\mathbb{E}^3$, $\mathbb{H}^2
\times \mathbb{R}$, and $\mathbb{H}^3$ geometries, respectively.
This is discussed in more depth in Section \ref{FutureWork}.

\section{The Complement of a Link} \label{Complement}
There is a well known decomposition of every non-split alternating
link complement into two ideal polyhedra (for instance see
\cite{complement}or \cite{CuspStructures}). The boundary of each
polyhedron has edges and faces corresponding to the alternating
diagram we begin with; different diagrams can give different
decompositions. We assume that all alternating diagrams are
reduced. A diagram is reduced when no circle in the plane
intersects the diagram transversely at a single crossing and
nowhere else \cite{Colin}.

One of the more useful properties of this decomposition is that
the faces can be oriented in a checkerboard fashion, so that a
clockwise face only borders counterclockwise faces, and
vice-versa.  In fact, this orientation can be used to describe the
gluing map between the polyhedra: each face on one polyhedron is
glued to the corresponding face on the other polyhedron after a
single twist, the direction of the twist agreeing with the
orientation.  See Figure \ref{Checkerboard}.
\begin{figure}
\begin{center}
\scalebox{.7}{\includegraphics{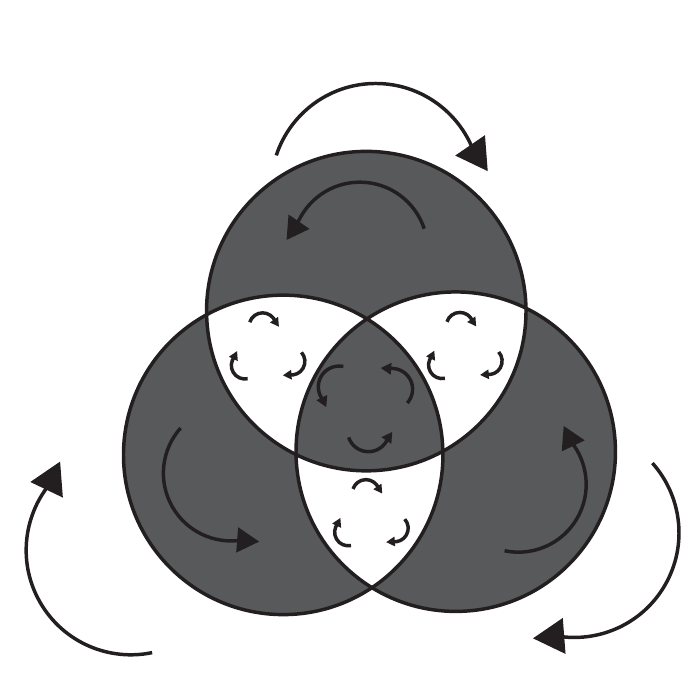}} \caption{The
checkerboard diagram for the Borromean rings.}
\label{Checkerboard}
\end{center}
\end{figure}

Also, we truncate the ideal polyhedra.  Thus, every ideal vertex
is replaced with a a new square face that is not identified with
any other face under the gluing map (see Figure
\ref{Boundaryinter}).
\begin{figure}[h]
\begin{center}
\scalebox{.5}{\includegraphics{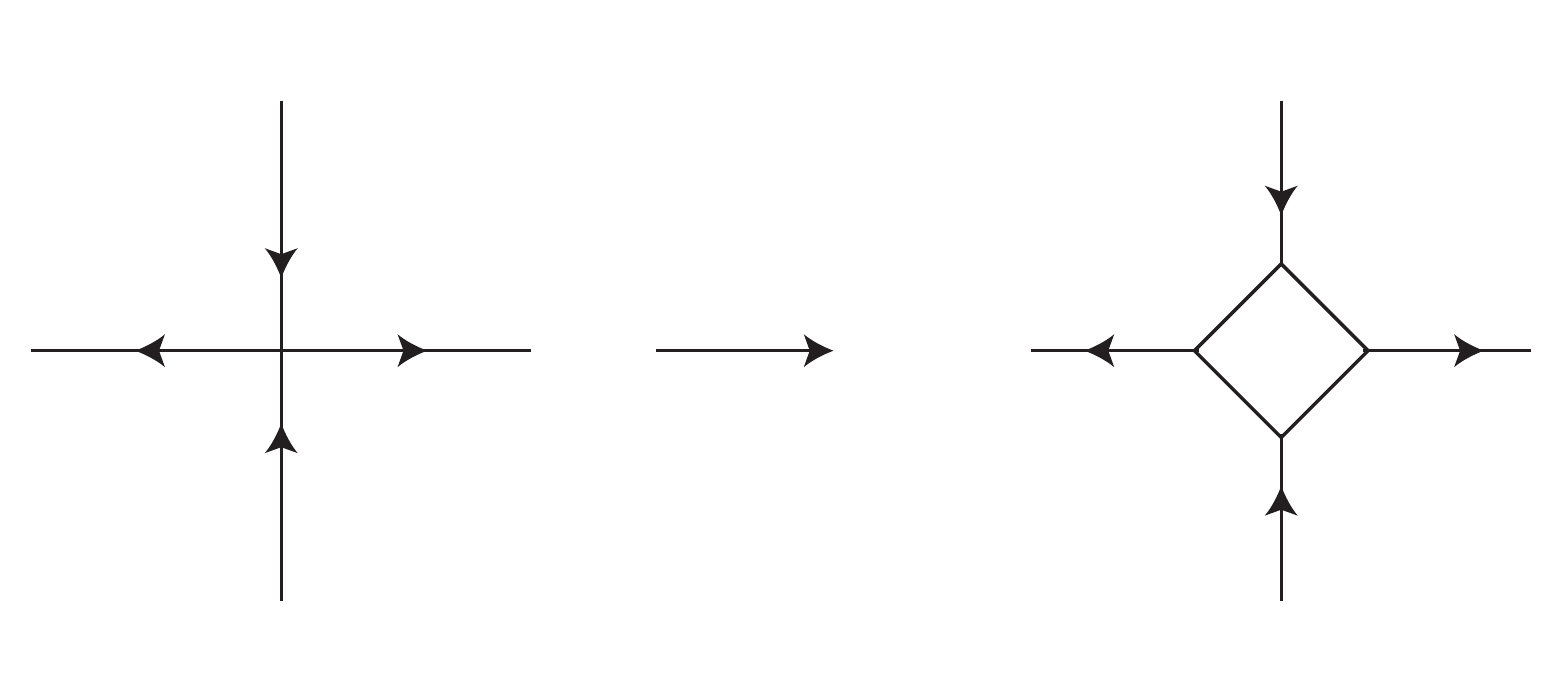}} \caption{Each
intersection is replaced by a square.} \label{Boundaryinter}
\end{center}
\end{figure}

When we glue these new polyhedra together, the squares form the
boundary of a compact manifold with boundary, namely the
complement of an open tubular neighborhood of the link.  We refer to these squares as `truncation squares'.

We mention one more property of an alternating link. Recall that a
link is split if there is a 2-sphere in its complement separating
one component of the link from another. A link is composite if
there is a 2-sphere that intersects the link exactly twice, and
the two pieces of the link thus separated are non-trivially
knotted. A link that is not composite is prime.  Menasco
\cite{split} showed that given a reduced alternating diagram of an
alternating link, the link is split if and only if the diagram is
split (i.e. has more than one component). Also, the link is prime
if and only if the reduced diagram is prime.  A diagram is prime
when no two regions share more than one edge.

\section{The Universal Cover and Replacement Rules}
\label{Universal} Our main goal is to study the space at infinity
of the fundamental groups of these link complements.  Although
these link complements all contain a copy of $\mathbb{Z}\times
\mathbb{Z}$ and are therefore not hyperbolic in the sense of
Gromov, they still have a space at infinity in a geometric sense.
More specifically, the universal cover will give us a sequence of
approximations to this space at infinity. We construct the
universal cover by starting with one half of the link complement,
gluing on copies truncated polyhedra recursively and looking at
the boundary of balls of constant word length. These boundary spheres, and the tilings on them, will approximate the space at infinity.  We illustrate the
following process using the Hopf link, shown in Figure \ref{HopfChecks}.  Note
that when we truncate vertices, an ideal polyhedron for the Hopf link is a cube.

\begin{figure}
\begin{center}
\scalebox{.70}{\includegraphics{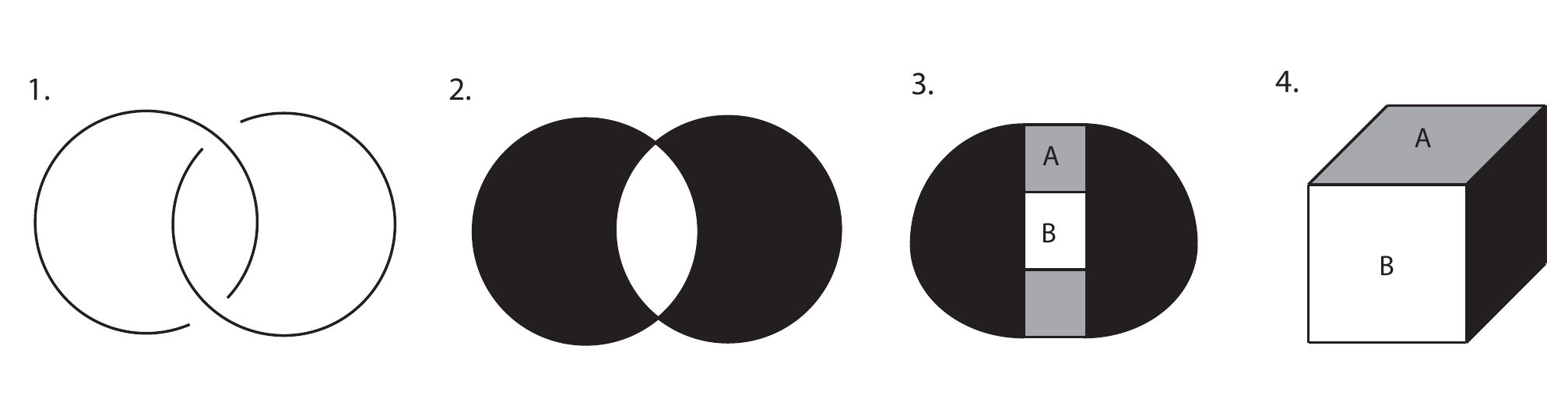}} \caption{An example of our truncation decomposition.  1)The Hopf Link. 2)The
checkerboard diagram for the Hopf link. 3)The truncated diagram for the Hopf link. 4)The polyhedron represented by the truncated diagram.} \label{HopfChecks}
\end{center}
\end{figure}

To get the recursive subdivision rule, we need to look at the
combinatorics of the gluing map.  In the original decomposition of
alternating links, each face is identified twice, each edge is
identified 4 times, and the vertices have been deleted, leaving a cusp, i.e. an ideal vertex.  When we
truncate ideal vertices, each edge on the new truncation square is identified twice,
and each new vertex four times.  These edges on the squares will be referred to as `truncation edges'.  All edges coming from the original diagram will be referred to as `link edges'.

As we build the universal cover, the covering map rules will cause
some collapsing. The universal cover is dual to the Cayley graph;
if we continue to glue a single polyhedron on each open face, we
would get the Cayley graph of a free group. These link complement
groups are not free, so the relators of these groups will cause
some collapsing as we build the Cayley graph. The dual notion to
relators is the local homeomorphism condition. Each edge in the
manifold locally meets four polyhedra, so each edge in the
universal cover must meet four polyhedra.

Our initial polyhedron will have as many faces and link edges as the
alternating diagram we began with, along with a truncation square for each
vertex of the alternating diagram. We glue a distinct polyhedron
onto each non-truncation face of the original polyhedron. So far no
other gluings are required by the local homeomorphism conditions.  This is shown in Figure \ref{HopfExplanation1}.

\begin{figure}
\begin{center}
\scalebox{.50}{\includegraphics{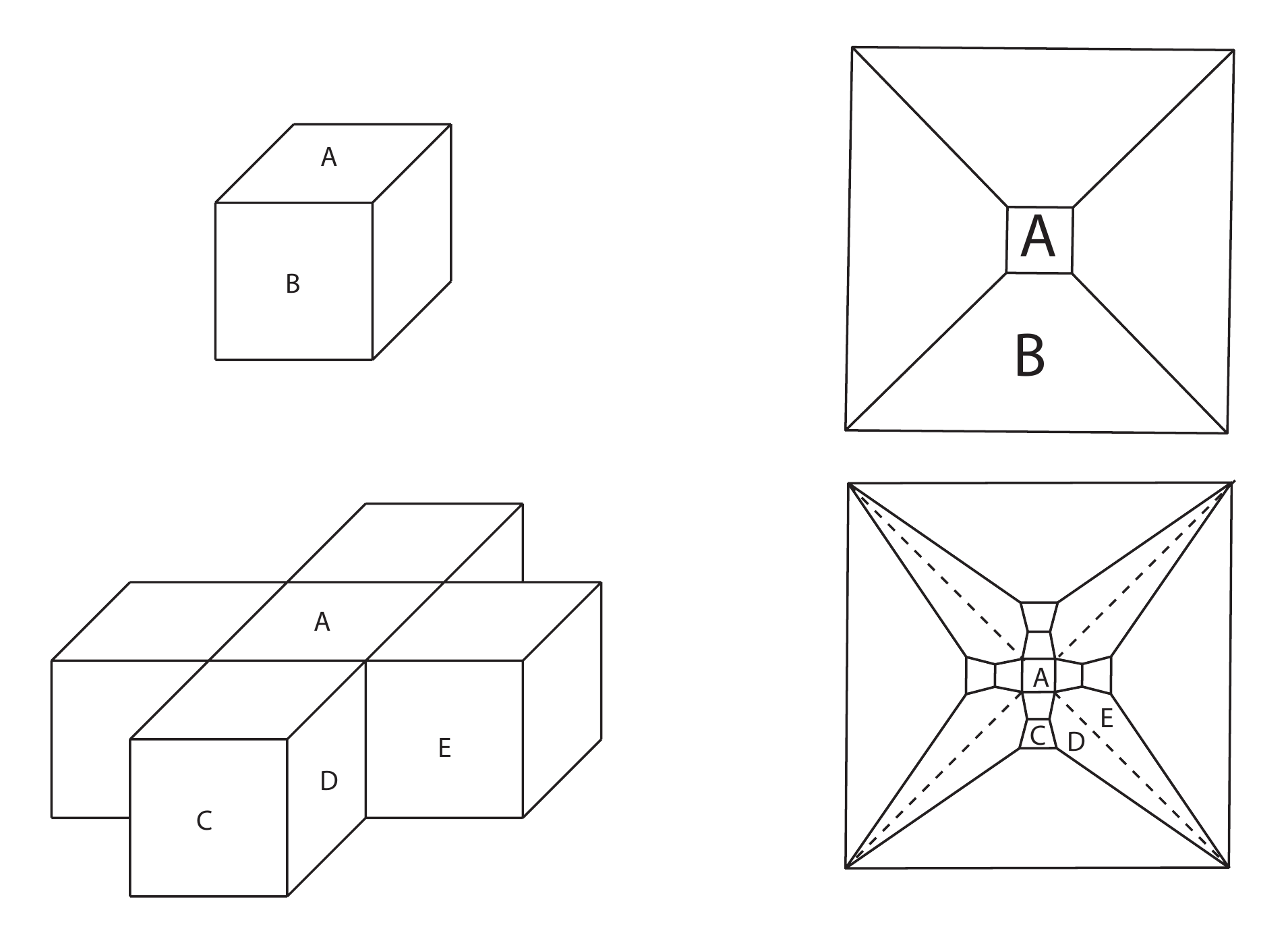}}
\caption{Constructing the universal cover of the Hopf link
complement. Under our truncation decomposition, the link
complement is the union of 2 cubes; their tops and bottoms are
truncation squares and are not identified.} \label{HopfExplanation1}
\end{center}
\end{figure}

Each link edge on the initial polyhedron has now had two
other edges identified to it, one from the face on either side.
These edges can only be identified one more time. Such edges will
be called `loaded'. Throughout this paper, loaded
edges are indicated by dotted lines.

We will refer to every face that is not a truncation square as a
region.  When we glue a half of the link complement onto a region
and project, the new regions we added inside of the old one will
be called subregions.  Those that touch the boundary of the old
region will be called edge subregions, and all others are interior
subregions.  In Figure \ref{HopfExplanation1}, the cube with faces
C and D was glued onto face B.  In this example, C is an interior
subregion and D is an edge subregion of the region B.

In the next stage, we again glue a polyhedron onto every open
face, but now we have `loaded pairs', two faces that share a common
loaded edge. We glue a single polyhedron onto both regions of a
loaded pair. See Figure \ref{HopfExplanation2}.  Again, all new regions are called edge subregions if
they touch the boundary of the loaded pair, and interior
subregions otherwise. In Figure \ref{HopfExplanation2}, every
subregion of the loaded pair consisting of D and E is an edge subregion.

\begin{figure}
\begin{center}
\scalebox{.50}{\includegraphics{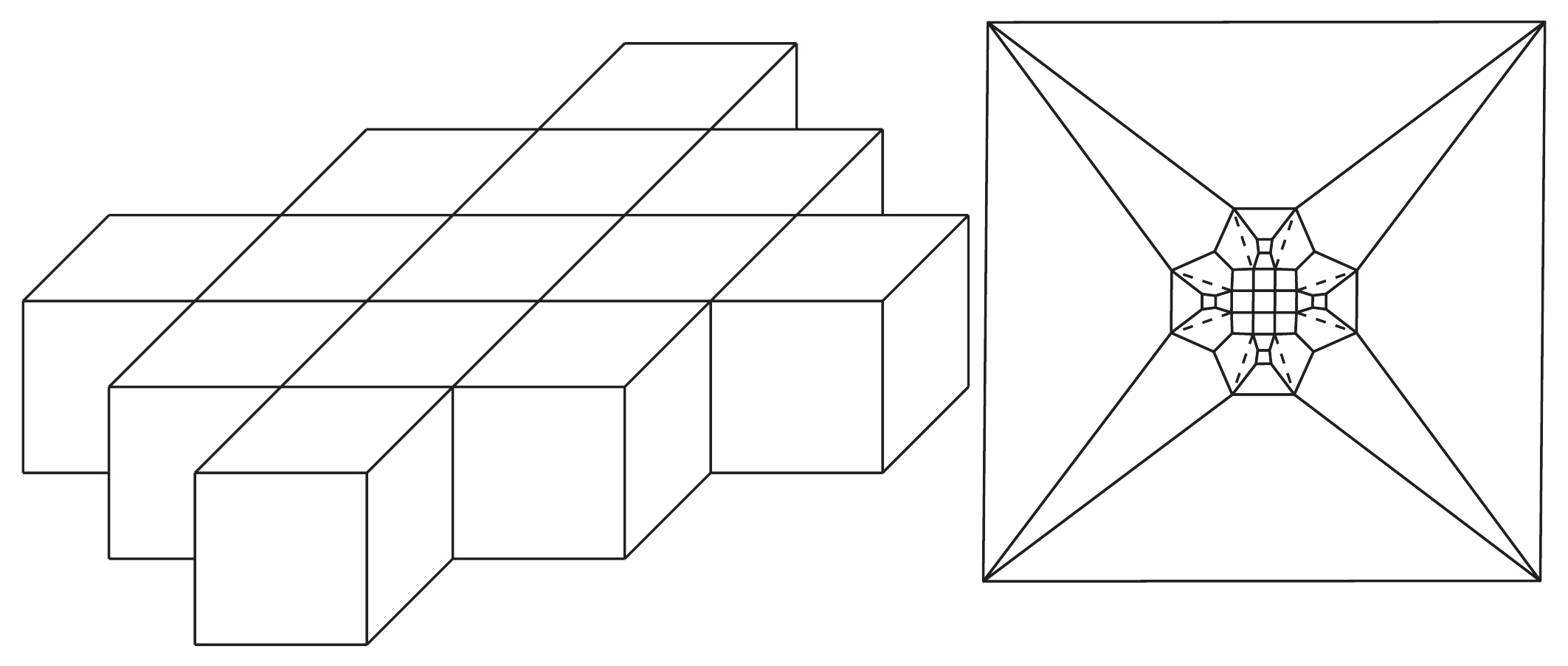}}
\caption{Constructing the universal cover of the Hopf link
complement. Under our truncation decomposition, the link
complement is the union of 2 cubes; their tops and bottoms are
truncation squares and are not identified.} \label{HopfExplanation2}
\end{center}
\end{figure}

The truncation edges are never considered loaded.  They
identify twice and are never covered up by newer pieces of the
link complement.

Note that in Figures \ref{HopfExplanation1} and \ref{HopfExplanation2} all visible faces fall
into three categories: truncation squares (like region A), regions
with no loaded edges (like regions B and C), and pairs of regions
sharing a loaded edge (like D and E). This is just one instance of
a more general phenomenon:

\begin{thm} \label{Loadedproof}
In each stage of constructing the universal cover of a prime,
alternating link, a region can have at most one loaded edge.
\end{thm}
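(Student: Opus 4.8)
The plan is to induct on the stage of the construction, using two structural facts about our truncated polyhedra. First, since the diagram is prime no two regions share more than one edge, and since a truncation square meets each of the four regions around its crossing along a single truncation edge, \emph{any two faces of one of our polyhedra share at most one edge}. Second, the checkerboard orientation two-colours the regions so that two regions sharing a link edge always receive opposite colours; equivalently, the graph of link-edge adjacencies among regions is bipartite. The base cases are then immediate from the discussion above: the initial polyhedron has no loaded edge, and after a polyhedron is glued onto each region the only loaded edges are the original link edges, so by the single-edge fact each resulting region abuts at most one of them.

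For the inductive step I treat the two gluing moves separately. If a polyhedron $P$ is glued onto a region carrying no loaded edge, then every edge of that region meets at most two polyhedra (an exposed face on a third polyhedron at an edge would make that edge loaded), so after identifying the single face $G$ of $P$ with the region no edge is completed to four polyhedra and no further identification is forced. Thus $P$ touches the complex only along $G$; every edge of $P$ off $G$ still meets only $P$, and each newly exposed face meets $G$ in at most one edge by the single-edge fact. That lone edge is the only candidate to be loaded, so every new region has at most one loaded edge, and the interior subregions, meeting $G$ in no edge, have none.

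The substantive move is gluing a polyhedron $P$ onto a loaded pair, i.e. filling the wedge at a loaded edge $e$ whose two exposed walls $F_1,F_2$ are the regions of the pair; note $e$ is a link edge, since truncation edges never load. Let $G_1,G_2$ be the faces of $P$ identified with $F_1,F_2$, adjacent in $P$ along the edge carried to $e$. A newly exposed face $R$ of $P$ can pick up a loaded edge only where it meets $G_1$ or $G_2$, and by the single-edge fact it meets each in at most one edge. The only threat is an $R$ adjacent to \emph{both}, sharing a link edge with each; but two regions sharing a link edge have opposite colours, so this would force $R$ to differ in colour from both $G_1$ and $G_2$ while $G_1$ and $G_2$ are themselves opposite, which is impossible with two colours. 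Hence at least one of the two shared edges is a truncation edge, and therefore unloaded, leaving $R$ with at most one loaded edge; interior subregions again carry none.

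The step I expect to be the main obstacle is verifying that these face identifications are the \emph{only} way a loaded edge can appear on $P$: closing the wedge at $e$ may force, through the local homeomorphism condition, further edge identifications that could in principle load a second edge of some $R$. I would control this by strengthening the inductive hypothesis to record the exact local form of a loaded edge and its pair---a link edge flanked by truncation edges whose two walls meet only along that edge---so that the edges disturbed by closing $e$ are pinned down and the two-colouring argument can be applied to each of them, allowing the induction to reproduce precisely the local configuration analysed here.
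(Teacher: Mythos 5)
Your proof is correct and follows essentially the same route as the paper: induction over the stages of the construction, primeness (no two regions share more than one edge) to rule out a subregion meeting the boundary of a single covered region twice, and the checkerboard orientation (your two-colouring) to rule out a subregion acquiring a loaded edge from each half of a loaded pair. The residual worry in your final paragraph is not an obstacle in the paper's framework: gluing one polyhedron onto the loaded pair is precisely what completes the four-polyhedra condition at the loaded edge, so no further identifications are forced there and loaded edges arise only along the boundary of the region or pair being covered, exactly as you assumed.
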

\begin{proof}
This proof is inductive.  A subregion has loaded edges exactly when it touches the boundary of a region being subdivided.
Therefore, we need only show that no subregion touches more than one boundary edge.

Let R be a region with no loaded edges. Place a half of the link
complement on it. The interior subregions are free from loaded
edges, by definition. Each of the edge subregions touches the
boundary in at least one edge.  By primeness of the link, an edge
subregion cannot touch the boundary in more than one edge.  This follows from the last paragraph of Section \ref{Complement}. So the
replacement rule creates only subregions with one loaded edge or
none in this case.

Now let's pass to regions with loaded edges.  Since we place a single chunk
of the link complement on two such regions, consider a pair of
loaded regions at a time.  Let $R_1$ and $R_2$ be two regions that
share a loaded edge.  Then place a half of the link complement on
the two regions. In this case, $R_1$ and $R_2$ are treated as a unit, with a common boundary formed by the non-loaded edges of the two regions. This can be seen in Figure \ref{HopfExplanation2}, when a single cube covers up both D and E.  Assume a subregion $L$ has two
loaded edges. Then the edges it touches belong to $R_1$ or $R_2$.
If they both come from a single region, for instance, $R_1$, we
violate the primeness condition.  If it touches one edge of each,
then we get a contradiction due to orientation: $L$ must have an
orientation opposite of both $R_1$ and $R_2$, yet $R_1$ and $R_2$
have opposite orientations. Thus, even in the loaded case, only
subregions containing one or zero loaded edges are created.
\end{proof}
\section{Subdivisions With Boundary}
\label{SectionBoundary}

The patterns described in the last section are not exactly
subdivision rules.  A subdivision rule is defined by a set of closed polygonal tiles, with a rule for subdividing each tile into smaller tiles which are allowed to overlap on their boundaries.  If two tiles share an edge, then their subdivisions must agree on that edge, dividing it into the same number of pieces.  What we currently have are not subdivision rules, but replacement rules; i.e. some lines are
covered up and erased as we go along.  In a subdivision rule,
every line stays put forever, and we just draw over them, so that
every new stage is a refinement of the previous one.  This is what
makes subdivision rules powerful; the subdivisions approach a
limit of sorts.

To get a subdivision rule from a replacement rule, we can simply
homotope some of the fresh, unloaded edges to be drawn directly
over the loaded edges.  Thus, the loaded edges need not be erased.
Every loaded edge is bordered by two edge subregions, one on
either side. We can pick one of the two regions and merge it into
the other region by homotoping the non-loaded part of its boundary
onto the loaded edge as in Figure \ref{MovingLines}.

\begin{figure}[ht]
\begin{center}
\scalebox{.85}{\includegraphics{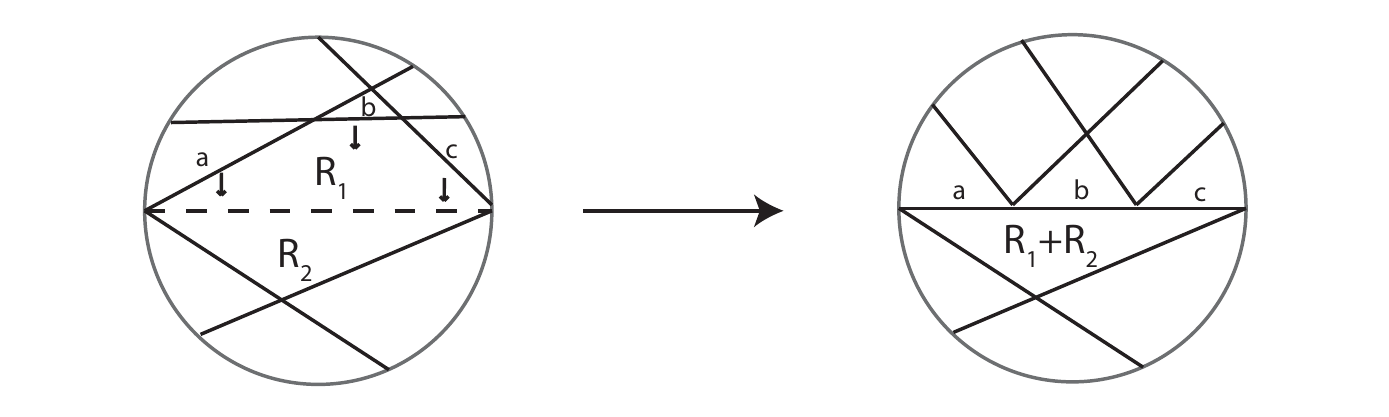}} \caption[How the
edges of a subregion move over]{This picture illustrates how the
lines of an edge subregion move to replace the loaded edge in a
general link. Lines a,b, and c all move over in this example.}
\label{MovingLines}
\end{center}
\end{figure}

If we can find an assignment of regions so that every loaded edge
is covered, we will have a subdivision rule.  We are now ready to
prove our main theorem.

\begin{thm} \label{BigTheorem}
Every prime, non-split alternating link admits a subdivision rule
with boundary.
\end{thm}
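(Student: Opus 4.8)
The plan is to promote the replacement rule of Section~\ref{Universal} to an honest subdivision rule by carrying out the edge-merging of Figure~\ref{MovingLines} in a globally consistent way. As observed just before the statement, the only thing left to do is to find an assignment that attaches to each loaded edge one of its two bordering edge subregions, so that the homotoped edges cover every loaded edge simultaneously; the heart of the argument is showing that Theorem~\ref{Loadedproof} makes such an assignment automatic.

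First I would encode the merging choices as a combinatorial selection. Form a graph $\Gamma$ whose vertices are the regions appearing at the current stage and whose edges are the loaded edges, each joining the two edge subregions it borders. By Theorem~\ref{Loadedproof} every region carries at most one loaded edge, so every vertex of $\Gamma$ has degree at most one; hence $\Gamma$ is a matching, a disjoint union of isolated vertices and single edges. For each edge of $\Gamma$, equivalently for each loaded pair $(R_1,R_2)$, I choose one endpoint, say $R_1$, and declare that the fresh, non-loaded boundary of $R_1$ is the portion homotoped onto the shared loaded edge as in Figure~\ref{MovingLines}. Because $\Gamma$ is a matching, each region lies in at most one loaded pair, so no region is ever asked to collapse its boundary onto two different loaded edges; the choices are mutually independent and therefore automatically consistent. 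In particular every loaded edge receives a covering region, which is exactly the condition identified above as sufficient to produce a subdivision rule.

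It then remains to check that this assignment genuinely yields a subdivision rule with boundary. Since each loaded edge is now covered by the homotoped edges of its assigned region rather than erased, no line is ever removed: each stage is a refinement of the previous one, which is the defining feature of a subdivision rule. Two tiles meeting along a common edge inherit their subdivisions from the same half of the link complement glued across that edge, so the induced subdivisions agree there, as the definition requires. Finally, the truncation squares play the role of the boundary: by the remarks of Section~\ref{Universal} truncation edges are never loaded and are never covered by later pieces, so the truncation squares persist through every stage and bound the subdivision. The main obstacle is precisely the global consistency of the merging, and it dissolves under the degree bound of Theorem~\ref{Loadedproof}; the remaining verifications amount to routine bookkeeping on the tile categories already isolated in Section~\ref{Universal}, namely truncation squares, regions with no loaded edge, and loaded pairs (now merged into single tiles).
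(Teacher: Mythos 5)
Your reduction to the matching graph $\Gamma$ only rules out one of the three obstructions this construction faces, and the consistency claim that carries your whole proof is exactly where the real difficulty lies. The degree bound from Theorem~\ref{Loadedproof} does guarantee that no single subregion is asked to collapse its boundary onto two different loaded edges. But when a region is subdivided, all of its link boundary edges become loaded at the same stage, so edge subregions assigned to \emph{different} loaded edges can share a fresh edge or an interior vertex; the first two pictures in Figure~\ref{BoundaryProblems} depict precisely this in a hexagonal region. If your arbitrary choice of an endpoint for each edge of $\Gamma$ happens to select two such adjacent subregions to move, their common fresh edge (or common vertex) must be homotoped onto two distinct loaded edges simultaneously, and the assignment is not well defined. ``Mutually independent'' choices per loaded edge do not make the resulting homotopies compatible, because distinct moving subregions interact along shared boundary; your argument proves well-definedness at each loaded edge separately, not globally. (Your closing remark that two tiles sharing an edge are subdivided by the same half of the link complement is also not right in general --- only the two members of a loaded pair are covered by a single polyhedron --- and agreement of subdivisions along edges is exactly what the moving construction has to arrange.)

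The paper closes this gap with the checkerboard orientation, which your proof never uses: fix the orientation of the regions and move exactly the clockwise edge subregions. Since the two subregions flanking a loaded edge have opposite orientations, this choice is well defined at each loaded edge, and since any two subregions sharing an edge have opposite orientations, no two moving subregions are edge-adjacent; finally, because every interior vertex has valence three, two subregions sharing an interior vertex must already share an edge, so no two moving subregions share a vertex either. That canonical, orientation-based selection is what makes the global homotopy consistent. To repair your proof you would need either to import this orientation argument or to prove that some choice of matching endpoints avoids adjacent moving subregions --- which is not automatic, and is in substance the content of the paper's argument.
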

\begin{proof}
Fix an orientation on the regions of the truncated polyhedron associated to the alternating link.
Assign the non-loaded edges of every clockwise edge subregion at each
stage to replace the loaded edge. Since exactly one region on
either side of an edge is clockwise, this is well-defined at that
edge. Now, we have to worry if it is globally well-defined; in particular, if two
edge subregions that move happen share an edge or an interior vertex, then their
movement must agree on their common vertex or line.  And if a
subregion has more than one loaded edge, we can't use that
subregion's edges to replace the old loaded ones. These three potential problems are shown in Figure
\ref{BoundaryProblems}.

\begin{figure}
\begin{center}
\scalebox{.6}{\includegraphics{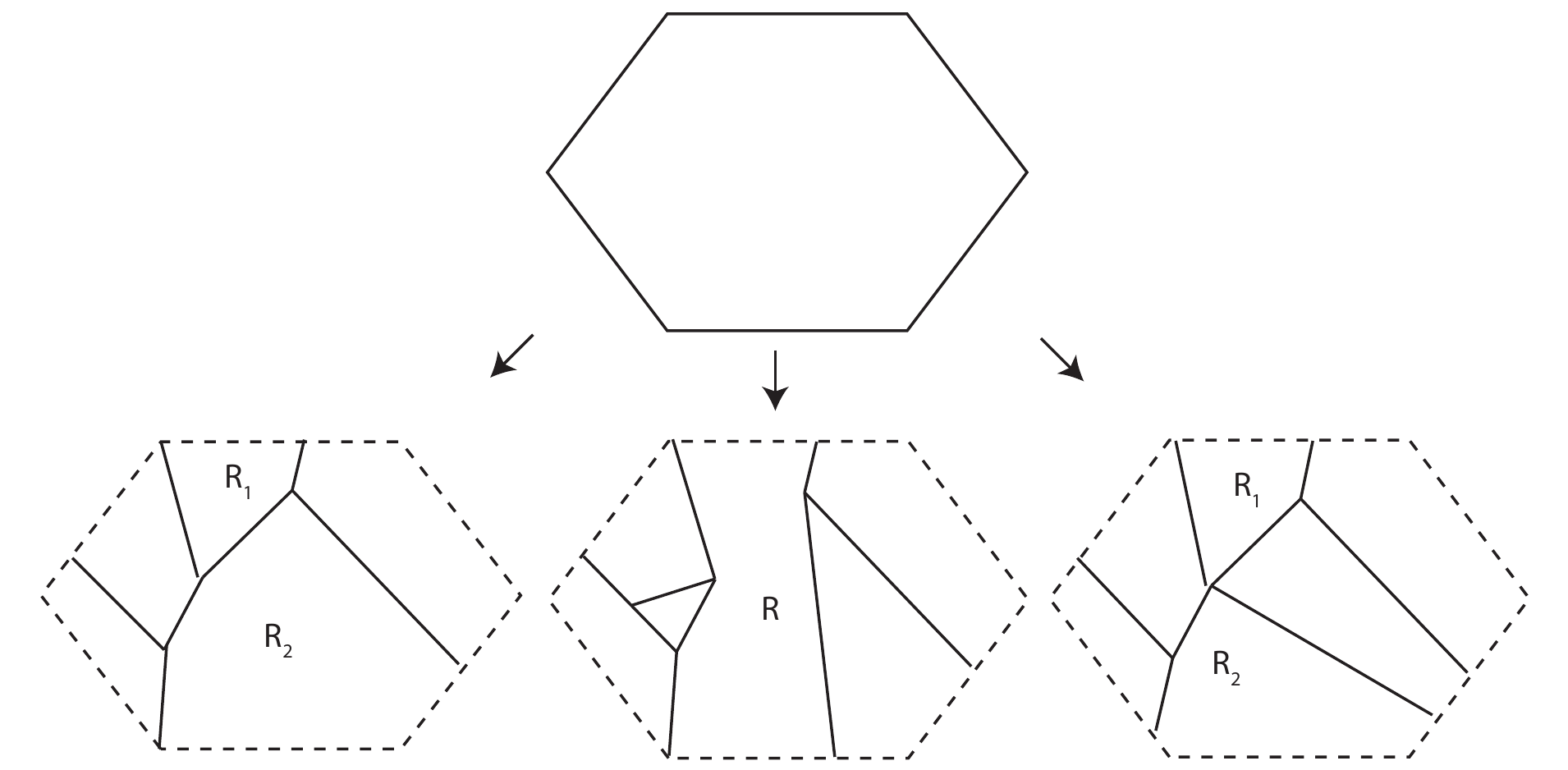}}\caption{These are examples of problems that might arise in a hypothetical hexagonal region.}
\label{BoundaryProblems}
\end{center}
\end{figure}

However, by Theorem \ref{Loadedproof}, no subregion has more than one loaded edge, so the last possibility does not occur.
We also need to eliminate the first two possibilities: moving regions sharing an edge or a vertex.  But two regions that share an edge must have opposite orientation.  Since we only move clockwise regions, no two moving regions share an edge.
Also, every interior vertex of a region has valence three.  Thus, if two subregions share an interior vertex, they share an edge.  This can be seen in the first two images in Figure \ref{BoundaryProblems}.  In particular, since no two moving regions share an edge, no two moving regions share an interior vertex.  So
our assignment is globally well-defined.

This gives a subdivision rule for every alternating link
complement.
\end{proof}

Note that we could equally well have chosen all counterclockwise
regions to move out.
  
Let's illustrate the process described in Theorem \ref{BigTheorem}, again using the Hopf link as our example.  Figure \ref{HopfToSub} shows the three possible replacement patterns we have.  Note that these are all views of the same polyhedron, a cube.  On the very right we have a loaded pair, which is a combination of a clockwise and counterclockwise region.  Two regions are hidden from view, but it is still a cube.  You can imagine the loaded edge as passing behind the diagram, connecting the leftmost and rightmost vertices.  The clockwise edge subregions in type B are pushed out according to our theorem.  The resulting subdivision rule divides the left and right edges into three pieces.  Since subdivision tilings must agree on the edges, type A or clockwise tiles must also be divided into three edges, since every clockwise region touches counterclockwise regions.  Thus, we add extra vertices to its left and right edges.  Those edge subregions become type C or loaded edges.  This is because they are `receiving' the clockwise regions being pushed out from type B tiles.  This is the process illustrated in Figure \ref{MovingLines}.  Note that in type C tiles, one half behaves as a clockwise region and one half behaves as a counterclockwise region.  This allows them to border both clockwise and counterclockwise regions.  This is typical of loaded pairs.

\begin{figure}
\begin{center}
\scalebox{.4}{\includegraphics{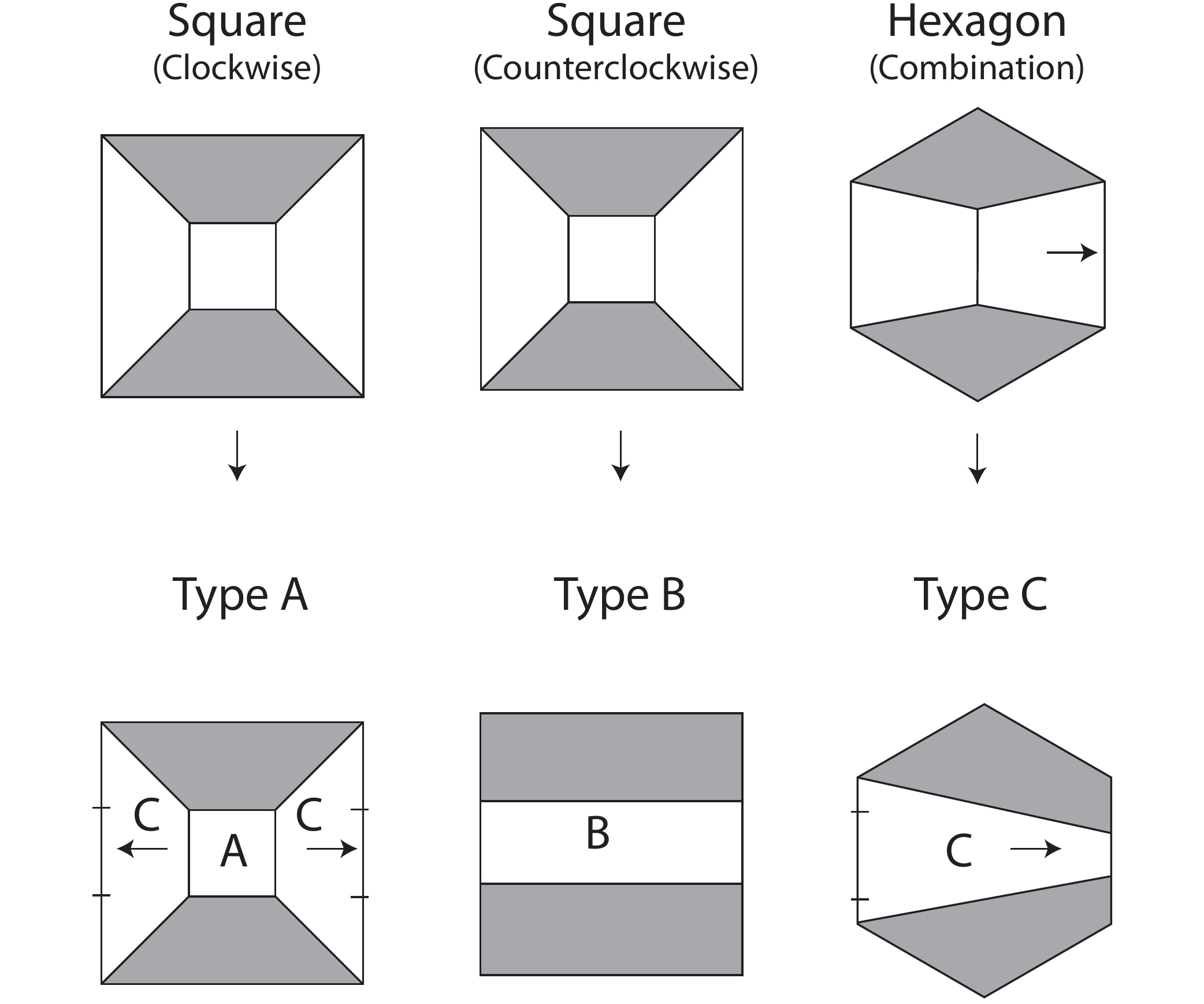}}\caption{The three kinds of regions that occur in the Hopf link's universal cover are converted into subdivision tiles.  the arrow on C tiles show how to orient them correctly.  In this figure and all following ones, the color grey represents a truncation square.  They are never subdivided.}
\label{HopfToSub}
\end{center}
\end{figure}

The first few subdivisions of tile A can be seen in Figure \ref{HopfBound}.  If you draw the subdivisions by hand, you will note differences between your results and Figure \ref{HopfBound}.  This is because the figures were drawn with Stephenson's Circlepack \cite{Circlepak}, along with some preprocessing programs by Floyd \cite{floyd}.  These programs maintain a subdivision tiling's combinatorial structure while shuffling things about to fit in the 'nicest' way.  Figures \ref{comparisons} and \ref{Bringb} were also created with these programs.

\begin{figure}
\begin{center}
\scalebox{.4}{\includegraphics{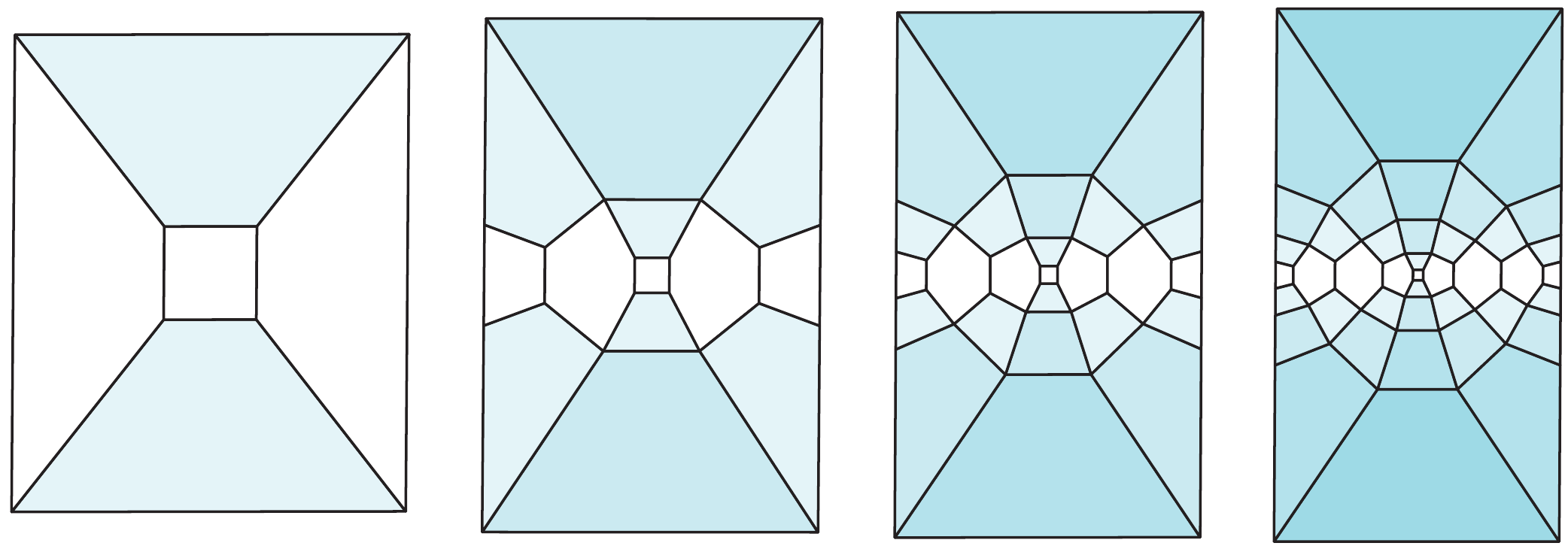}}\caption{The first four subdivisions of tile A.  Grey squares are truncation squares.  Compare with Figures \ref{HopfExplanation1} and \ref{HopfExplanation2}.}
\label{HopfBound}
\end{center}
\end{figure}

\section{Examples}
Let's look at some more examples. The trefoil is
the easiest example after the Hopf link. Its standard diagram can
be oriented so that all bigons are clockwise and all triangles are
counterclockwise.  There are three tile types for the trefoil
(four if you count the boundary squares). The replacement tiles
are shown in Figure \ref{TrefBoundary}, along with the subdivision
rule we get by moving all bigons out to replace loaded edges. Edge
labels are omitted and grey tiles represent boundary squares.  Note that the truncated polyhedron is a hexagonal prism.

\begin{figure}[ht]
\begin{center}
\scalebox{.6}{\includegraphics{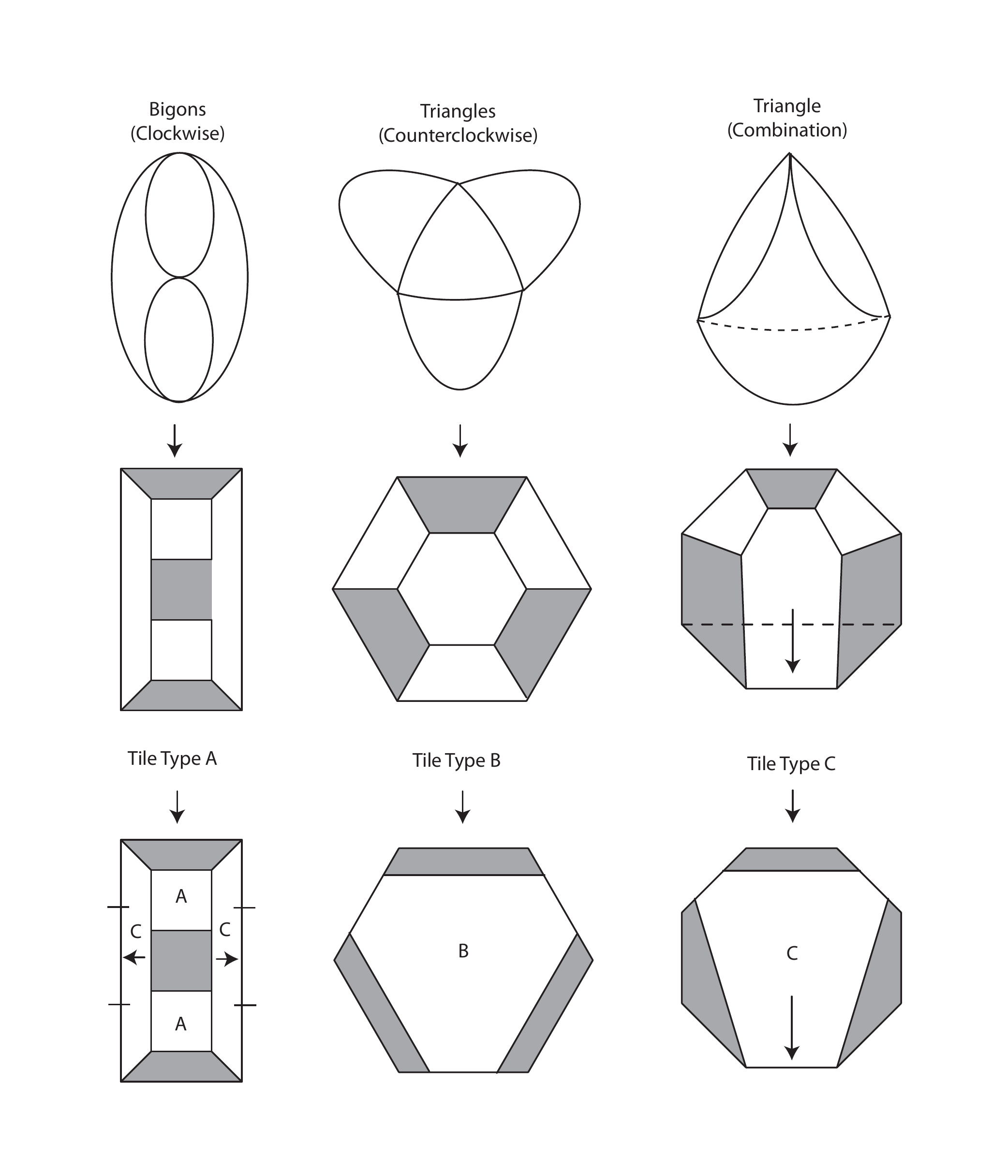}}\caption{The
replacement rules and subdivision rules for the trefoil.}
\label{TrefBoundary}
\end{center}
\end{figure}

Figure \ref{comparisons} shows the first few subdivisions of a
type A tile.

\begin{figure}
\begin{center}
\scalebox{.70}{\includegraphics{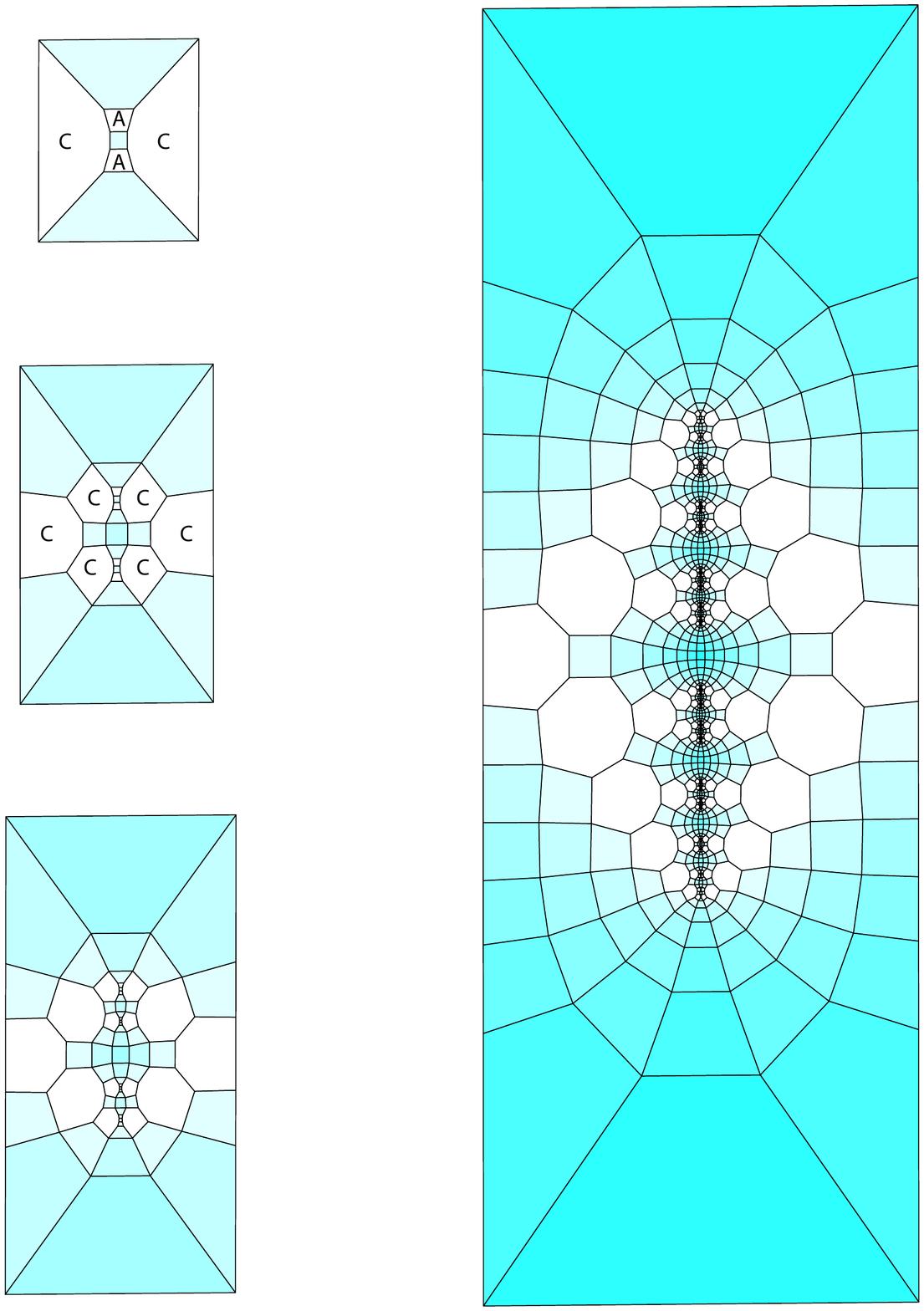}} \caption{The first
few subdivisions of tile A of the trefoil.  A few tile types have been indicated to aid in seeing the subdivision, as this is more complicated than the Hopf link.} \label{comparisons}
\end{center}
\end{figure}

Note that the boundary squares on the edge of the tiles should
always be placed to line up with the boundary squares of the
previously placed regions.  What's happening is that we're
building the universal cover of the torus at each boundary square.

Our last example is the Borromean rings.  The truncated polyhedron is a truncated octahedron.  Every region (i.e. faces that are not truncation squares) is identical.  However, it is impossible to get a subdivision rule that treats all hexagonal regions the same \cite{myself}. The subdivision rule is shown in \ref{BorroSubs}.

\begin{figure}
\begin{center}
\scalebox{.60}{\includegraphics{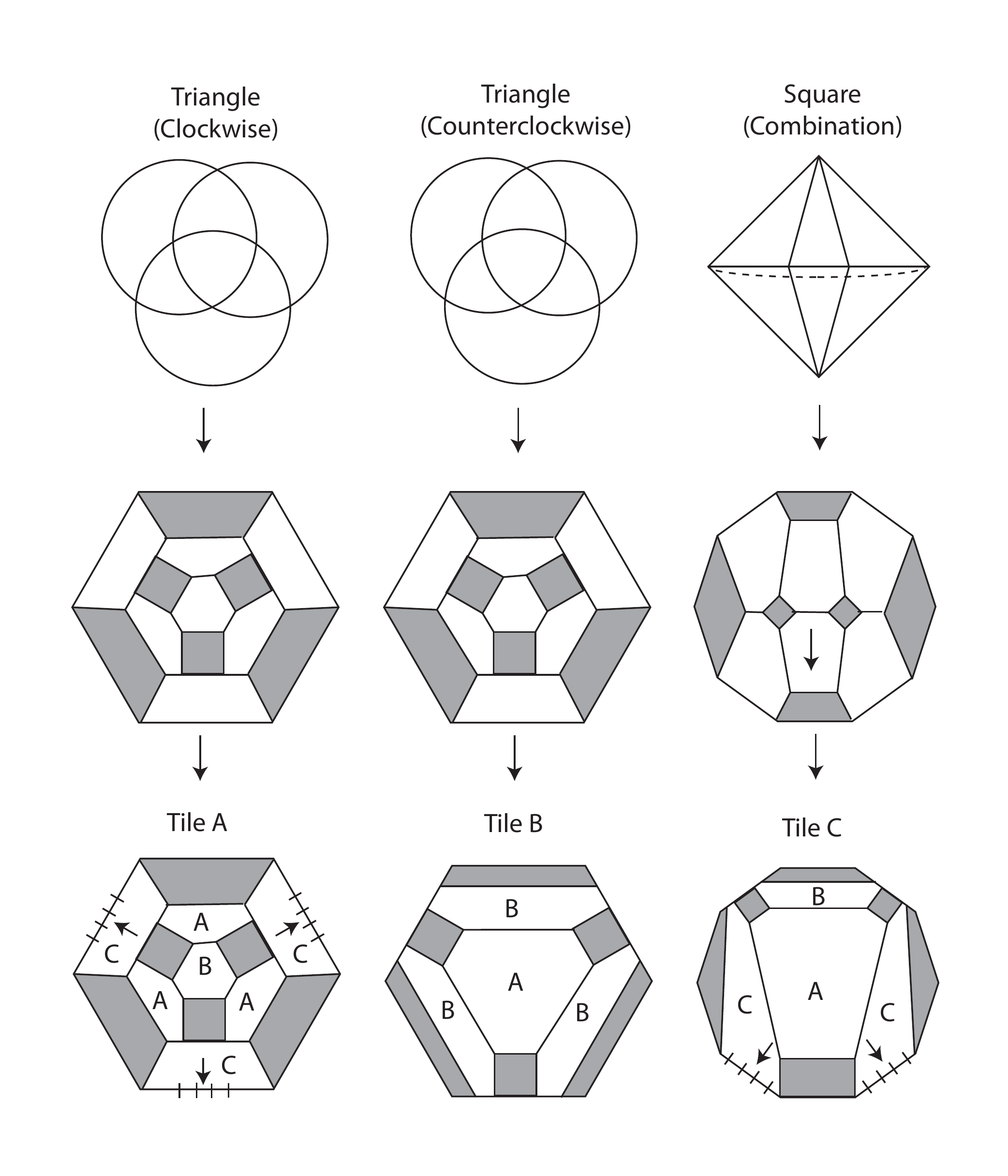}} \caption{The subdivision tiles for the Borromean rings.} \label{BorroSubs}
\end{center}
\end{figure}

Figure \ref{Bringb} contains the first few subdivisions of tile A
of the Borromean rings. As with the Hopf link and the trefoil, you
can see the universal cover of the torus growing at each vertex of
the link in a beautiful pattern.

\begin{figure}
\begin{center}
\scalebox{.75}{\includegraphics{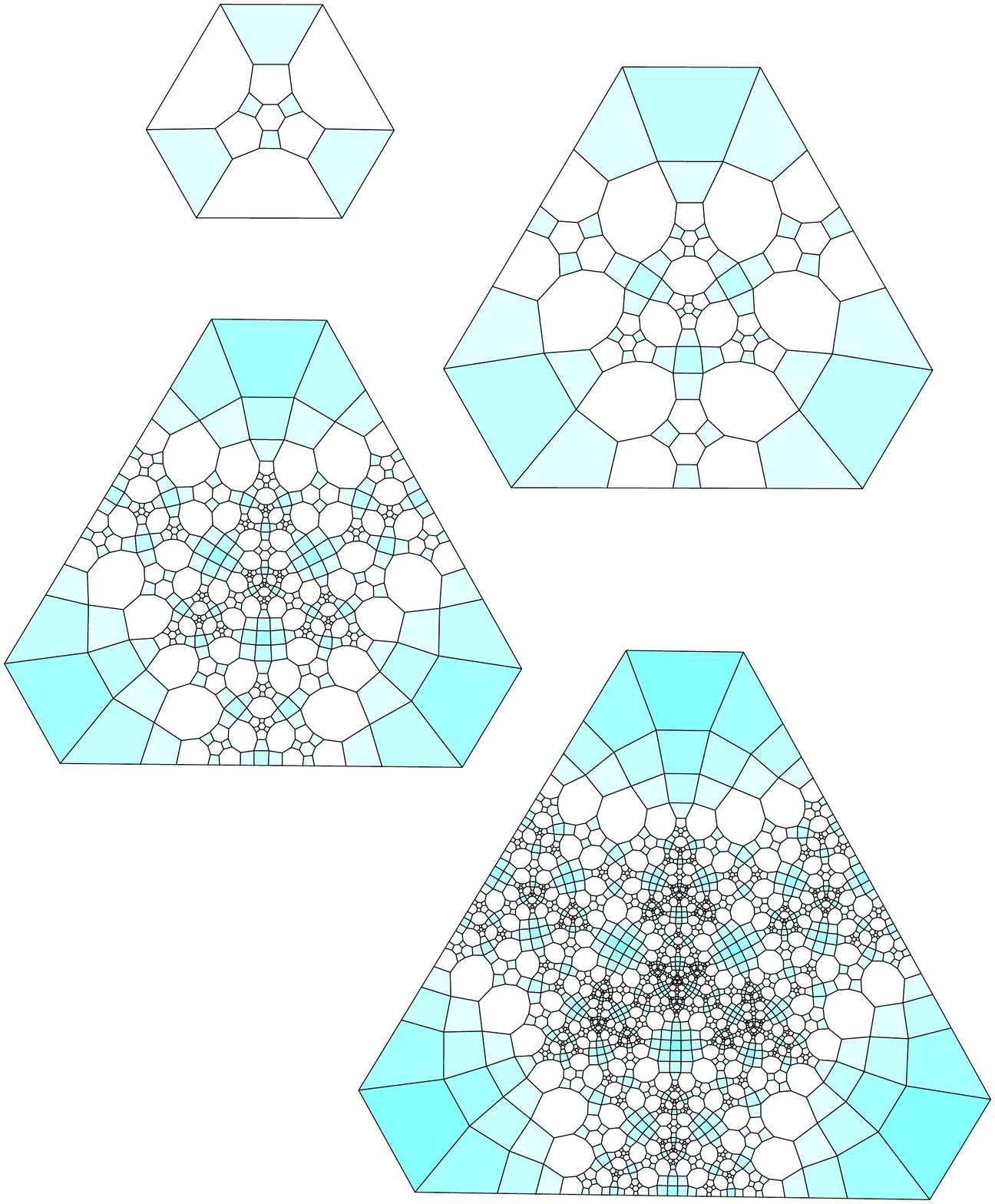}} \caption{The
first few subdivisions of tile A of the Borromean rings.}
\label{Bringb}
\end{center}
\end{figure}

\section{Future Work}
\label{FutureWork} The Hopf link is the only prime alternating
link with a Euclidean complement, and the trefoil is a
representative of other two-braid links, which have geometry
associated with $\mathbb{H}^2 \times \mathbb{R}$.  The Borromean
rings, and all other alternating links that are not two-braids,
have hyperbolic geometry \cite{split}.  This is reflected in their subdivision
rules.  The Hopf link has polynomial growth and only two boundary
planes.  The other two-braids have boundary planes that form a
circle of sorts; and the Borromean rings and all other alternating
links have a set of boundary planes that scatter all over the
sphere.

We have found subdivision rules for all surfaces cross the circle
and for all unit tangent bundles over surfaces (except for $P^3$,
which cannot have an infinite subdivision rule), and a single hyperbolic orbifold.  Their properties are related to
the open manifolds we described in the above paragraph. We hope to
make these connections more precise.

Also, these subdivision rules for links have the boundary tori
exposed. It would seem possible to alter these subdivision rules by Dehn
surgery, yet this has proved difficult in practice.  A few
examples have been worked out, but a general theory would be nice.

Finally, it would be natural to extend these subdivision rules to
non-alternating and/or composite links.  The main difficulty here
is that non-alternating links and composite links both tend to
have more collapsing than prime alternating links.  This truncated decomposition will give us two polyhedra as before, but
Theorem \ref{Loadedproof} no longer holds.  However, we have found subdivision
rules for all torus links \cite{myself}, and it is
easy to find subdivision rules for certain connected sums of Hopf
links, so there may be hope for a general theorem.

\newpage
\phantomsection

\bibliographystyle{plain}
\bibliography{bibpaper}

\end{document}